\title{Betti Numbers of Syzygies and Cohomology of Coherent Sheaves}
\author{David Eisenbud and Frank-Olaf Schreyer}
\date{}
\begin{document}
\maketitle

\newtheorem{theorem}{Theorem}[section]
\newtheorem{prop}[theorem]{Proposition}
\newtheorem{lemma}[theorem]{Lemma}
\newtheorem{cor}{Corollary}
\newtheorem{definition}{Definition}

\begin{abstract} The Betti numbers of a graded module over the polynomial ring form a table of numerical invariants that refines the Hilbert polynomial. A sequence of papers sparked by conjectures of Boij and S\"oderberg have led to the characterization of the possible Betti tables up to rational multiples---that is, to the rational cone generated by the Betti tables. We will summarize this work by describing the cone and the closely related cone of cohomology tables of vector bundles on projective space, and we will give new, simpler proofs of some of the main results. We also explain some of the applications of the theory, including the one that originally motivated the conjectures of Boij and S\"oderberg, a proof of the Multiplicity Conjecture of Herzog, Huneke and Srinivasan. 
\end{abstract}

\noindent
{\bf Mathematics Subject Classification(2000).} Primary 13D02 ; Secondary 14F05.

\noindent
{\bf Keywords.} Betti numbers, free resolutions, syzygies, cohomology of coherent sheaves, multiplicity

%

\def\sL{{\mathcal L}}

\def\sU{{\mathcal U}}
\def\sG{{\mathcal G}}
\def\sF{{\mathcal F}}
\def\sE{{\mathcal E}}
\def\sO{{\mathcal O}}
\def\sM{{\mathcal M}}
\def\coker{{\rm coker}}
\def\bF{{\bf F}}
\def\QQ{{\mathbb Q}}
\def\RR{{\mathbb R}}
\def\PP{{\mathbb P}}
\def\ZZ{{\mathbb Z}}
\def\lTo{{\leftarrow}}
\def\codim{{\hbox{codim}\,}}
\def\mult{{{\rm mult}\,}}

\section{Introduction}

Hilbert's Syzygy theorem states that
every finitely generated graded module over a polynomial ring $S=K[x_1,\ldots,x_n]$
has a finite free  resolution of length at most $n$. Hilbert's motivation was to 
 show that the \emph{Hilbert function} $h_{M}(k):= \dim_{K}M_{k}$ is given by a polynomial $p_{M}(k)$ for large $k$, as follows:  for a graded module
 $M= \oplus_k M_k$ over the standard graded polynomial ring $S$ consider a finite free graded resolution, that is, an exact sequence
 $$\bF: \quad (0\lTo M  \lTo )F_0 
  \lTo 
  F_1 \lTo \ldots \lTo F_r \lTo 0$$
 with $F_i=\oplus_j S(-j)^{\beta_{i,j}}$ and $S(-j)$ the free cyclic $S$ module with generator in degree $j$. The numbers $\beta_{i,j}$ are called
 the \emph{graded Betti numbers} of the resolution. From the exactness of the resolution  Hilbert deduced the formula
$$ 
h_M(k)=\dim_K M_k = \sum_{i=0}^r (-1)^i \sum_j \beta_{i,j} {k+n-j \choose n} 
$$
for the Hilbert function $h_M$. Since the combinatorial binomial coefficient ${m\choose n}$ agrees with the polynomial
$$
{m(m-1)\cdots (m-n+1)\over n!} \in \QQ[m]
$$ 
 when $m$ is large enough,
the same formula defines  the Hilbert polynomial $p_M(k) \in \QQ[k]$. 
The Hilbert polynomial $p_M(k)$ captures the most important properties of $M$. For example,
$\deg p_M= \dim M$, and the leading coefficient of $p_M$ times ${(\dim M)!}$ is the  the multiplicity  of $M$.

A minimal free resolution of a module $M$ is a free resolution $\bF$ such that  no proper summand of $F_{i+1}$ maps surjectively onto the kernel
$\ker(F_{i} \to F_{i-1})$. It is determined up to isomorphism by $M$ (see for example \cite{E1}), and thus the \emph{graded Betti numbers} $\beta_{i,j}=\beta_{i,j}(M)$ of a minimal free resolution are invariants of $M$.
We collect the Betti numbers of $M$ as usual in a \emph{Betti table} 
$
\beta(M)=(\beta_{i,j}(M)) 
$. 
Hence, $\beta(M)$ is a numerical invariant that refines the Hilbert polynomial. 

There are many papers
whose goal is to describe this invariant and its possible values in special cases. 
In  2006 Mats Boij and Jonas S\"oderberg suggested a relaxation of this problem that
opened the door to a radically different approach.
The set of 
Betti tables form a semigroup, since the direct sum of modules corresponds
to the addition of Betti tables. Allowing multiplication by positive rational
numbers instead of just positive integers, we get a rational convex
cone, the cone of Betti tables.
Boij and S\"oderberg conjectured that the \emph{extremal rays} of this cone are spanned by Betti 
tables of so called \emph{pure resolutions}, described below.

In \cite{ES1} we showed that these  conjectures were true. Besides proving 
the existence of pure resolutions this involved finding the equations of the facets of the cone of Betti tables.
To describe how this was done, we must introduce another invariant of a finitely generated graded module
$M$ that also refines the Hilbert polynomial.
Consider the coherent sheaf $\sE$ on $\PP^{n-1}$ represented by $M$. 
The value of the Hilbert polynomial  $p_{M}(k)$ coincides with  the Euler characteristic of the twisted sheaf: $\chi \sE(k)$.
Since the Euler characteristic is the alternating sum of the dimensions of the cohomology groups, we can also think of the
 \emph{cohomology table} $\gamma(\sE)=(h^j \sE(k))$ of $\sE$ as a refinement of the Hilbert polynomial. And, just as with
 Betti tables, it is natural to consider the convex rational cone generated by the cohomology tables.

The key idea in our proof of the Boij-S\"oderberg conjecture was to show that the facets of the cone of Betti tables of finite length modules come from the extremal rays
in the closed cone of cohomology tables of vector bundles on projective space. We identified these extremal rays
and showed that vector bundles
with such extremal cohomology tables exist, so that the cone of cohomology tables, like the cone of Betti tables, is closed. 
Using our results, Boij and S\"oderberg extended the theory to arbitrary modules \cite{BS2}. On the cohomology side, we generalized the results to arbitrary coherent sheaves \cite{ES2}. 

A flurry
of other papers and preprints including  \cite{EFW},  \cite{ESW}, \cite{Erman1}
\cite{Erman2} and \cite{EES} have added to the basic picture and its applications. 
In this note we give a new and simpler proof  of our main result on the cone of Betti tables, and we give a simpler treatment of
the theorem of Boij and S\"oderberg on Betti tables of arbitrary finitely generated modules. We also explain some applications,
and survey what is known in some other cases. 

\section{Betti Tables}

As above, let  $S=K[x_1,\dots,x_n]$ be a polynomial ring over a field $K$, graded with each $x_i$ of degree 1. For a finitely generated graded $S$-module $M$ 
we may regard its
Betti table as an integral point in an infinite dimensional $\QQ$-vector space:
$$\beta(M)=(\beta_{i,j}(M)) \in \bigoplus_{i=0}^n \bigoplus_{j\in \ZZ} \QQ.$$

The cone generated by finite positive rational linear combinations of Betti  tables is called the \emph{Boij-S\"oderberg cone}.
Our main result on Betti tables is a description of this cone in terms of its extremal rays.

\begin{definition}
A 
finitely generated graded $S$-module $M$ is called \emph{pure} of 
type $d:=(d_0,\dots, d_r)$ if
\begin{enumerate}
\item $M$ is Cohen-Macaulay of codimension $r$; that is, $F_i=0$ for $i>r$ and $\dim M=n-r$ 
\item in a minimal
free resolution of $M$ as above, the free module $F_i$ is generated by elements of degree $d_i$;
that is, $\beta_{i,j}=0$ when $j\neq d_i$.
\end{enumerate}
\end{definition}

\begin{prop} If $M$ is a pure module of type $d$, then $\beta(M)$ is determined up to a rational factor by $d$. 
Further, $\beta(M)$ spans an extremal ray in the Boij-S\"oderberg cone of Betti tables.
\end{prop}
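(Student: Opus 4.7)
My plan is to prove the two assertions separately.

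For the first assertion I would invoke the \emph{Herzog--K\"uhl equations}. Specializing the Hilbert-function formula from the introduction to a pure module of type $d=(d_0,\ldots,d_r)$ yields
$$h_M(k)=\sum_{i=0}^r (-1)^i \beta_{i,d_i}(M)\binom{k-d_i+n-1}{n-1},$$
a polynomial in $k$ of degree at most $n-1$. Since $M$ is Cohen--Macaulay of codimension $r$, this polynomial has degree exactly $n-r-1$, so the coefficients of $k^{n-1},\ldots,k^{n-r}$ on the right must vanish. After a triangular change of basis these $r$ conditions become the Vandermonde-type system
$$\sum_{i=0}^r (-1)^i \beta_{i,d_i}(M)\, d_i^s =0,\qquad s=0,1,\ldots,r-1.$$
Minimality of the resolution forces $d_0<d_1<\cdots<d_r$, so the system has rank $r$ on the $(r+1)$-dimensional space of pure Betti tables, pinning down $\beta(M)$ up to a single rational scalar.

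For the second assertion I would argue extremality directly. Suppose $\beta(M)=T_1+T_2$ with $T_1,T_2$ in the Boij--S\"oderberg cone, and expand each $T_j$ as a finite positive rational combination $T_j=\sum_\alpha c_{j,\alpha}\beta(N_{j,\alpha})$ of Betti tables of nonzero finitely generated graded modules. Since $\beta(M)$ vanishes off the diagonal positions $(i,d_i)$ and all entries and coefficients on the right are non-negative, each $\beta(N_{j,\alpha})$ must also be supported on these positions. In particular $\mathrm{pd}\, N_{j,\alpha}\le r$, so by Auslander--Buchsbaum $\mathrm{depth}\, N_{j,\alpha}\ge n-r$, and hence $\dim N_{j,\alpha}\ge n-r$ and the Hilbert polynomial of $N_{j,\alpha}$ has degree at least $n-r-1$ with positive leading coefficient (the multiplicity of $N_{j,\alpha}$).

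The key step is then to observe that the Hilbert-polynomial map is linear in the Betti table, so $h_M(k)=\sum_{j,\alpha}c_{j,\alpha}h_{N_{j,\alpha}}(k)$. The left side has degree $n-r-1$, while the right is a positive combination of polynomials of degree $\ge n-r-1$ whose leading coefficients are all positive; no cancellation is possible, so each $h_{N_{j,\alpha}}$ has degree exactly $n-r-1$, i.e., each $N_{j,\alpha}$ has dimension $n-r$. Combined with the depth bound, every $N_{j,\alpha}$ is Cohen--Macaulay of codimension $r$, hence pure of type $d$. The first assertion applied to each $N_{j,\alpha}$ then forces $\beta(N_{j,\alpha})$, and therefore $T_1$ and $T_2$, to be rational multiples of $\beta(M)$.

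The main obstacle would be decompositions involving non-Cohen--Macaulay modules whose Betti tables happen to sit on the pure diagonal; the Herzog--K\"uhl equations on their own do not exclude such modules. The positivity of leading coefficients of Hilbert polynomials, together with the positivity of the coefficients $c_{j,\alpha}$ in the decomposition, is exactly what rules this case out.
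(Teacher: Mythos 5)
Your proposal is correct and follows essentially the same route as the paper: the Herzog--K\"uhl Vandermonde system pins down a pure Betti table up to a scalar, and extremality is forced by combining the Auslander--Buchsbaum bound on depth with the positivity of leading coefficients of Hilbert polynomials to show every summand is Cohen--Macaulay and pure of the same type. The only differences are cosmetic (you derive the dimension equality from below via depth, the paper from above via the Hilbert polynomial first).
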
 

\begin{proof} Suppose $\beta(M)= \sum_{\ell=1}^N q_\ell \beta(M_\ell)$ with rational numbers $q_\ell >0$. We have to prove that the Betti tables $\beta(M_\ell)$ all lie in the same ray as $\beta(M)$. Each of the modules $M_\ell$ has $\dim M_\ell \le \dim M$, because otherwise the Hilbert polynomial of $M$ would have larger degree. Since by the Auslander-Buchsbaum-Serre formula,
the length of a free resolution is at least the codimension, the equality of Betti tables implies that
each $M_\ell$ is Cohen-Macaulay with the same codimension $r$ as $M$ and that each $M_\ell$ is pure with the same type $(d_1,\ldots,d_r)$ as $M$. The proof that each $\beta(M_\ell)$ lies in the same ray as
$\beta(M)$ follows by an argument of Herzog and K\"uhl \cite{HK}: Consider the Hilbert series
of $M$ defined as
$$H_M(t)= \sum_{d\in \ZZ} \dim M_d t^d = \frac{\sum_{i,j} (-1)^i \beta_{i,j} t^j}{(1-t)^n} \in \QQ[[t]][t^{-1}].$$
This rational function has a pole of order $\dim M$ at $t=1$, or equivalently, the
numerator $\sum_{i,j} (-1)^i \beta_{i,j} t^j$ has a zero of order $\codim M$. In case of a pure module the
numerator simplifies to
$$\sum_{i=0}^r (-1)^i \beta_{i,d_i} t^{d_i}.$$
Hence, the $r+1$ numbers $\beta_{0,d_0}, \ldots, \beta_{r,d_r}$ satisfy a system of $r$ linear equations
$$\sum_{i=0}^r \beta_{i,d_i} d_i^s=0 \hbox{ for } s=0,\ldots r-1$$
of Vandermond type. Hence by Cramer's rule,
$$\beta_{i,d_i}= q \prod_{t>s,\; t,s\not=i} (d_t-d_s )$$ 
are determined by the type up to a common rational factor $q$.

\end{proof}

\noindent
With $\beta(d)$ we denote the rational table on the ray of type $d$ normalized such that
$$\beta_{i,d_i}(d) =\prod_{j\not=i} \frac{1}{| d_j-d_i |}.$$

To formulate our main result one more preparation is necessary:
we order the strictly increasing sequences $d$ as follows:
$$
d=(d_0,\dots, d_r)\leq d'=(d_0',\dots,d_{r'}')
$$
if $r\geq r'$ and $d_i \leq d'_i$ for $i=1,\dots,r'$. One can think of this as the termwise order if one
simply extends each sequence $d=(d_0,\dots, d_r)$ to \
$(d_0,\dots, d_r, \infty, \infty,\dots)$.

We can now state the main result of the theory concerning the cone of Betti tables:

\begin{theorem}[\cite{BS2,ES1}] \label{bettimain} Let $S=k[x_1,\dots,x_n]$ be as above. 
\begin{enumerate}
\item For every strictly increasing sequence of integers $d=(d_0, \dots, d_r)$ with $r\leq n$, there
exist pure  $S$-modules of type $d$.
\item The Betti table of any finitely generated graded $S$-module may be written uniquely as
a positive rational linear combination of the Betti tables of a set of pure  modules
whose types form a totally ordered sequence.

\end{enumerate}
\end{theorem}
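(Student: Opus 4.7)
The plan is to treat the two parts by different methods: an explicit construction for existence, and a greedy subtraction algorithm backed by a duality argument for the decomposition.

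For part (1), given a strictly increasing sequence $d=(d_0,\dots,d_r)$ with $r\le n$, I would exhibit a pure module of type $d$ by an explicit construction. In characteristic zero, the Eisenbud--Fl{\o}ystad--Weyman approach realizes such a module as a Schur functor applied to a suitable equivariant complex; the resulting resolution is pure by construction, and the preceding Proposition then pins down the Betti numbers as a positive rational multiple of $\beta(d)$. In arbitrary characteristic one instead uses a pushforward construction: a carefully chosen finite map from a product of projective spaces (or a weighted projective bundle) to $\PP^{n-1}$, together with a line bundle whose twists make all but one of the higher direct images vanish, yields a sheaf whose module of global sections has a pure resolution of the desired type.

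For part (2), I would run a greedy algorithm on Betti tables. Given $\beta = \beta(M)$, define its upper type $d(\beta)$ by $d_i(\beta) := \min\{j : \beta_{i,j}\ne 0\}$ on nonzero columns, and $d_i=\infty$ elsewhere. Minimality of $\bF$ forces each differential to have entries of strictly positive degree, so the $d_i(\beta)$ are strictly increasing and $d(\beta)$ is a legitimate pure type. Let $c>0$ be the largest rational for which $\beta' := \beta - c\,\beta(d(\beta))$ has all entries nonnegative; then $\beta'$ has a new zero entry, forcing $d(\beta') > d(\beta)$ in the partial order defined above. Iterating yields a strictly ascending chain of pure types, and the algorithm terminates in finitely many steps because each iteration removes an entry from the finite support of $\beta$. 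Uniqueness is then automatic, since at each stage both the type $d(\beta)$ and the coefficient $c$ are forced by the current table.

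The decisive step, and the main obstacle, is to verify that every $\beta'$ produced by the algorithm still lies in the Boij--S\"oderberg cone, so that the recursion is meaningful. I would establish this by duality with cohomology tables of vector bundles on $\PP^{n-1}$: for each pure type $d$ one constructs a ``supernatural'' vector bundle whose cohomology table pairs, via an explicit bilinear form, to zero with $\beta(d)$ and to a nonnegative number with the Betti table of every finitely generated graded module. Granted the existence of enough such bundles---the cohomological counterpart of part~(1), provable by analogous constructions---the inequalities cutting out the cone near the face spanned by $\beta(d(\beta))$ are preserved under the subtraction, so $\beta'$ remains in the cone and the decomposition goes through to completion.
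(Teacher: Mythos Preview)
Your outline matches the paper's strategy closely: existence via the Eisenbud--Fl{\o}ystad--Weyman/Schur construction in characteristic zero and the pushforward construction in general, and the decomposition via a pairing between Betti tables and cohomology tables of supernatural bundles, with the greedy algorithm as the organizing device.

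There is, however, one genuine technical point you have glossed over, and it is exactly where the paper's effort is concentrated. The single bilinear form $\langle\beta,\gamma\rangle=\sum_{i\ge j}(-1)^{i-j}\sum_k\beta_{i,k}\gamma_{j,-k}$ is indeed nonnegative on every pair (module, sheaf), but it does \emph{not} produce enough facets to cut out the simplicial fan: for the ``two-step'' outer faces (where the neighboring degree sequences $a>c$ in the chain differ in two consecutive positions) the untruncated pairing does not vanish along the face. The paper introduces a family of \emph{truncated} functionals $\langle-,\gamma\rangle_{\tau,\kappa}$, obtained by zeroing out certain entries, and proves separately (``Positivity~2'') that these are still nonnegative on Betti tables---but only for \emph{minimal} resolutions, since the proof uses that the differential respects a filtration of the spectral-sequence term. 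Your last paragraph (``pairs to zero with $\beta(d)$ and to a nonnegative number with every Betti table'') reads as if a single pairing per supernatural bundle suffices; it does not, and the truncation/minimality ingredient is the real crux.

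A smaller point: your sentence ``the inequalities cutting out the cone \dots\ are preserved under the subtraction'' is close to circular as written. What the paper actually does is identify, for each potential outer wall of the fan, a specific functional $\langle-,\gamma(\sE)\rangle_{\tau,\kappa}$ that is zero on the wall, strictly positive on the missing ray $\beta(b)$, and nonnegative on \emph{every} Betti table; this simultaneously proves that the wall is a genuine facet and that the greedy subtraction cannot overshoot. Framing it that way removes the apparent circularity.
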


The second statement of the theorem has two nice interpretations that may help to clarify its meaning.
First,  geometrically, it really says that the cone of Betti tables is
a \emph{simplicial fan}, that is, it is the union of simplicial cones, meeting along facets. The maximal 
simplicial cones in the fan
correspond to maximal chains (totally ordered subsets) in the partially ordered set of degree sequences: the simplicial
cone is the set of finite positive rational combinations of Betti tables whose degree sequences 
lie in the chain. 
These simplices and cones are thus infinite ascending unions of finite-dimensional cones, corresponding
to Betti tables with finite support, that is, resolutions
where the free modules are generated in a given bounded range of degrees.

Second, algorithmically, the theorem implies that there is a greedy algorithm that gives
the decomposition. Rather than trying to specify this formally, we give an example with $n=3$.
To describe it compactly, we will write the Betti table of a module $M$ as an array whose entries in the $i$-th
column are the $\beta_{i,j}$---that is, the $i$-th column corresponds to the free module $F_i$. For
reasons of efficiency and tradition, we put $\beta_{i,j}$ in the $(j-i)$-th row.

Consider the $K[x,y,z]$-module $M=S/(x^2, xy, xz^2)$. The minimal free resolution of
$M$ has the form
$$
S \lTo S(-2)^2\oplus S(-3) \lTo S(-3)\oplus S(-4)^2 \lTo S(-5) \lTo 0
$$
and is represented by an array
$$
\beta(M)=\begin{pmatrix}
1\\
& 2&1\\
&1&2&1
\end{pmatrix}
$$
where all the entries not shown are equal to zero.

To write this as a positive rational linear combination of pure diagrams, we first consider the ``top row'', corresponding
to the generators of lowest degree in the free modules of the resolution. These are in the positions 
$$
\begin{pmatrix}
*\\
& *&*\\
&&&*
\end{pmatrix}
$$
corresponding to the degree sequence $(0,2,3,5)$. There is in fact a pure module $M_1= S/I_1$ with resolution
$$
\beta(M_1) = \begin{pmatrix}
1\\
& 5&5\\
&&&1
\end{pmatrix}.
$$
The greedy algorithm now instructs us to subtract \emph{the largest possible $q_1$ 
that will leave the resulting table $\beta(M)-q_1\beta(M_1)$ having only non-negative terms.} We see at
once that $q_1= 1/5$.

We now repeat this process starting from $\beta(M)-q_1\beta(M_1)$; the theorem guarantees that there
will always be a pure resolution whose degree sequence matches the top row of the successive remainders.
In this case we arrive at the expression 
\begin{eqnarray*}
\beta(M)=\begin{pmatrix}
1\\
& 2&1\\
&1&2&1
\end{pmatrix}=&
{1/5}
\begin{pmatrix}
1\\
& 5&5\\
&&&1
\end{pmatrix}
+
{1/10}
\begin{pmatrix}
3\\
& 10&\\
&&15&8
\end{pmatrix} \cr
\quad &\vspace{0.5cm}\cr
&+
{1/6}
\begin{pmatrix}
1\\
& &\\
&4&3&\,
\end{pmatrix}
+
{1/3}
\begin{pmatrix}
1\\
& \\
&1&&\quad&
\end{pmatrix}.\cr
\end{eqnarray*}
\normalsize
All the fractions and tables that occur are of course invariants---apparently new invariants---of $M$.

\section{Facets of the Cone and Cohomology Tables}

We next focus on the facets of the cone of Betti tables (compare with \cite{BS1,BS2}).
Consider a finite chain of degree sequences. Since the rays corresponding to the degree sequences are linearly independent, these rays generate  a simplicial cone.
A facet (maximal face) of this cone is generated by all but one of the rays in our chain. If this ray corresponds to the degree sequence $b$, then we may assume that $b$ is neither the largest nor the smallest degree sequence by replacing our chain with a longer chain of degree sequences. Consider the  degree sequences $a>b$ and $c<b$ immediately above and below in our chain. 
By inserting a finite number of degree sequences between $a$ and $b$ if necessary, we can achieve that $a$ and $b$ differ in at most one position. Similarly, we can achieve that $b$ and $c$ differ in at most one position. Then the facet of this simplicial cone obtained by deleting $b$ is  an
\emph{outer face}---that is, it will lie on the boundary of the cone of Betti tables---if
 either $a$ and $c$ differ in precisely one position $\tau$, or
$a$ and $c$ differ in precisely two consecutive positions $\tau$ and $\tau+1$  and
$a_\tau \ge c_{\tau+1}$, compare \cite{BS1}, Proposition 2.2.
Indeed, suppose  that $a$ and $c$ differ in the position $\tau$ and $k$ with $k>\tau$ and, moreover, $k > \tau+1$
or $k=\tau+1$ and $a_{\tau}<c_{\tau+1}$.
Then both sequences 
$b=(\ldots,a_{\tau}, \ldots,  c_{k}, \ldots)$ and $b'=( \ldots,c_{\tau},\ldots, a_{k}, \ldots)$ are increasing, hence valid degree sequences between with $a$ and $c$. For $a_k<\infty$ we have the numerical  identity
$$(a_k-a_\tau)\beta(a)+(c_k-c_\tau)\beta(c)=(c_k-a_\tau)\beta(b)+(a_k-c_\tau)\beta(b'),$$
which becomes  $\beta(a)+(c_k-c_\tau)\beta(c)=(c_k-a\tau)\beta(b)+\beta(b')$, in case $a_k=\infty$.
Hence, once we know that pure modules for  arbitrary degree sequences exist, we can deduce that the facet of the simplicial  cone obtained by dropping $b$ (or $b'$) lies in the interior of the cone of Betti tables.

Hence, apart from the existence of pure resolution, we have to show that every potential outer face as above is indeed an outer face.
A typical example which could lead to an outer face is  the chain
$$a=(0,3,4) >b=(0,2,4) > c=(0,1,4)$$
for the case that $a$ and $c$ differ in only one position and
$$a=(0,2,3,4)>b=(0,1,3,4) >c=(0,1,2,4)$$
in case $a$ and $c$ differ in two positions. In the first case, the linear function $\beta \mapsto \beta_{\tau,b_\tau}$ is positive on the ray corresponding to $b$ and vanishes on all other rays in any simplex corresponding to a chain of degree sequence containing $a, b, c$. Clearly, this functional is also non-negative on Betti tables of arbitrary module. So these are indeed outer faces.

The second case is more complicated. 
We start by  replacing $a,b$ and $c$ by
$(\ldots,a_{\tau-1},c_{\tau+1},c_{\tau+1}+1,\ldots), (\ldots,a_{\tau-1},c_{\tau+1}-1,c_{\tau+1}+1,\ldots)$
and
$(\ldots,a_{\tau-1},c_{\tau+1}-1,c_{\tau+1},\ldots)$. 
For example, we will replace the triple $$(0, \infty,\infty) > (0,1,\infty) >(0,1,3)$$ by $$(0,3,4)>(0,2,4)>(0,2,3).$$
We will see that the equation, which  we will derive below in this new situation, works for the face obtained by deleting the original $b$ as well.

Now take a complete chain extending $a>b>c$.
We can compute the coefficient $\delta_{i,j}$ of  a functional 
$\delta: \beta \mapsto \sum_{ij} \delta_{i,j} \beta_{i,j}$ vanishing on the facet opposite to $b$ recursively.
We start by taking $\delta_{i,a_i}=0$ and work our way up and down in the chain. The condition
$\delta(\beta(c))=0$ 
determines the ratio of $\delta_{\tau,c_\tau}$ while $\delta_{\tau+1,c_{\tau+1}}$ and $\delta(\beta(b))>0$ determines the sign. Moving one step down from $c$ in the chain, determines one more coefficient of $\delta$.
In the example above we can look at the degree sequence
$$(0,1,3,4)>(0,1,2,4)>(0,1,2,3)=(0,1,2,3,\infty)> \ldots >(0,1,2,3,6)>$$
$$(0,1,2,3,5)>(0,1,2,3,4)>(-1,1,2,3,4)>\ldots$$
whose Betti tables are
$$
\begin{pmatrix}
 & & & &\\
2 & 4 &  & &\\
  &  &  4   &2&\;\\
     & & & &\\
\end{pmatrix} 
\begin{pmatrix}
 & & & &\\
3 &{\bf 8} & {\bf 6} & &\\
  &  & &     1&\;\\
     & & & &\\
\end{pmatrix}
\begin{pmatrix}
 & & & &\\
1 & 3 & 3 & {\bf 1}&\\
  &  & &   &\;  \\
     & & & &\\
\end{pmatrix}
\cdot \cdot \cdot
\begin{pmatrix}
\;&  &  &  &\\
   10&  36& 45 & 20 &\\ 
        & & & & \\
   & & & & {\bf 1}\\

\end{pmatrix}
$$
$$
\begin{pmatrix}
\;&  &  &  &\\
   4&  15& 20 & 10 &\\
   & & & & {\bf 1}\\
      & & & & \quad\\
\end{pmatrix}
\begin{pmatrix}
\;&  &  &  &\\
   1&  4& 6 &  4 & {\bf 1}\\
   & & & & \quad\\
      & & & & \quad\\
\end{pmatrix}
\begin{pmatrix}
{\bf 1} &  &  &  &\\
  &  10& 20 &  15 & 4\\
   & & & & \quad\\
      & & & & \quad\\
\end{pmatrix}
\ldots
$$
and obtain
$$\delta=(\delta_{i,j})=
\begin{pmatrix}
\vdots    & \vdots & \vdots & \vdots & \vdots \\
21 &-12 & 5 & 0 &-3\\
12 & -5 & 0 & \;3 &-4\\
5 & 0 & -3 & 4 &-3\\
 \bf 0 & 3 & -4& 3 &0\\
 0 & \bf 0& \bf 0&   \bf 0 &5 \\
  0 & 0& 0&   0  &12\\
\vdots    & \vdots & \vdots & \vdots & \vdots  \\
\end{pmatrix} 
$$
Of course, moving up in degree from $a$ will always yield zero coefficients. Note, that  the results of these computations apparently do not depend on the specific choice of the complete chain extending $a>b>c$.

To prove Theorem \ref{bettimain},
we have to show that each such $\delta$  is nonnegative on the Betti table $\beta(M)$ of an arbitrary module. Our key observation is that the numbers appearing are dimensions of cohomology groups of what we call supernatural vector bundles on $\PP^{r-1}$.

\begin{definition} A vector bundle $\mathcal E$ on $\mathbb P^m$ has \emph{natural cohomology} \cite{VB} if for each $k$ at most one of the groups
$$H^i(\mathcal E(k)) \not=0.$$
It has \emph{supernatural cohomology} if in addition the Hilbert polynomial
$$\chi(\mathcal E(k)) = \frac {\hbox{ rank } \mathcal E}{m!} \prod_{j=1}^m (k-z_j)$$
has m distinct integral roots $z_1 > z_2 >\ldots  > z_m$.
\end{definition}

\noindent Note that a supernatural vector bundle $\sE$ has non-vanishing cohomology  in the following range
(see \cite{ES1}):
$$
\begin{cases} H^0(\sE(k)) \not=0 \\
H^i(\sE(k)) \not=0 \\
H^m(\sE(k))\not=0 \\
\end{cases}
\hbox{if and only if }
\begin{cases}
\qquad k >z_1\cr
z_{i}>k > z_{i+1} \cr
 z_m > k\cr
\end{cases}.$$

\noindent
For a coherent sheaf $\mathcal E$ on $\mathbb P^m$ we denote by
$$\gamma(\mathcal E)=(\gamma_{j,k}) \in \bigoplus_{j=0}^m \prod_{k\in\ZZ}\QQ \hbox{ with } \gamma_{j,k}=h^j(\mathcal E(k))$$ its cohomology table. Analogous  to the theorem on free resolutions we have

\begin{theorem}[\cite{ES1}]\label{mainbundles} The extremal rays of the  rational cone of cohomology tables of vector bundles on $\PP^{m}$ are generated by
 cohomology tables of supernatural vector bundles. 
 
 More precisely:
 Every cohomology table of a vector bundle is a unique positive rational combination of cohomology tables of supernatural vector bundles, whose root sequences form a chain.
\end{theorem}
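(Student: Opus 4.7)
The plan is to mirror the proof of Theorem \ref{bettimain}, with supernatural vector bundles on $\PP^m$ playing the role of pure Cohen--Macaulay modules, and with ``chains of degree sequences'' replaced by ``chains of root sequences''. Throughout, the pivotal tool will be the pairing between Betti tables and cohomology tables that is already foreshadowed in the remarks preceding the theorem, namely that the facet functionals of one cone are constructed from the extremal rays of the other.

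First I would prove existence: for every strictly decreasing integer sequence $z_1 > z_2 > \cdots > z_m$ there is a supernatural vector bundle on $\PP^m$ with root sequence $z$. This is the cohomology-side analogue of part (1) of Theorem \ref{bettimain}, and it is the only step for which no purely formal argument suffices. Concretely one can build such $\sE$ as an equivariant resolution obtained from Schur functors on the tautological sequence as in \cite{EFW}, or by Veronese pullbacks of Horrocks-type constructions on $\PP^1$. Given existence, the non-vanishing pattern in the paragraph after the definition together with the Euler-characteristic formula $\chi(\sE(k)) = \tfrac{\mathrm{rank}\,\sE}{m!}\prod_{j}(k-z_j)$ determines every $h^j(\sE(k))$ from $z$ and $\mathrm{rank}\,\sE$, so $\gamma(\sE)$ is determined up to a positive rational scalar by $z$ alone.

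Second I would introduce the partial order $z \le z'$ iff $z_i \le z'_i$ for all $i$, and prove that supernatural tables indexed by a finite chain $z^{(1)} < \cdots < z^{(N)}$ are linearly independent. The argument is triangular: the row $h^m(\sE(k))$ vanishes exactly for $k \ge z_m$, so ordering the chain by $z_m$ makes the leading columns of the system upper triangular, and an induction on $m$ handles the remaining cohomology rows. This endows the cone of cohomology tables with a decomposition into simplicial cones indexed by chains. A greedy algorithm precisely parallel to the Betti one then makes sense: given $\gamma(\sE)$, locate the ``topmost'' supernatural table whose non-vanishing pattern is dominated by $\gamma(\sE)$, subtract the largest rational multiple that keeps the remainder entrywise nonnegative, and iterate.

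Third, and the real content, I would show that the greedy algorithm actually terminates with a nonnegative remainder, so every cohomology table of a vector bundle does decompose as claimed. Here the observation at the end of the preceding section is used in reverse. The facet functionals $\delta$ of the Boij--S\"oderberg cone are assembled from cohomology dimensions of supernatural bundles; symmetrically, the facet functionals of the cone of cohomology tables are assembled from graded Betti numbers of pure modules, paired with $\gamma(\sE)$ via $\langle \beta, \gamma \rangle = \sum_{i,j,k} (-1)^{?} \beta_{i,j} h^{?}(\sE(k))$. Evaluating this pairing on pure modules versus supernatural bundles in a chain confirms positivity for the relevant facets and hence nonnegativity for the greedy remainders; uniqueness of the decomposition along a chain then follows from the simplicial (linearly independent) structure established in the second step.

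The main obstacle is the first step, the existence of supernatural vector bundles for every integral root sequence. This is genuinely nonformal input and is the exact counterpart of the construction of pure resolutions that is needed for Theorem \ref{bettimain}. Once existence is in hand, the triangular structure and the Betti/cohomology duality transfer the Boij--S\"oderberg argument to the cohomology side essentially mechanically.
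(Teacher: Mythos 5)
Note first that this survey does not actually prove Theorem~\ref{mainbundles}; it states it with a citation to \cite{ES1} and only records the key hint that ``the essential facet equations of the cone of cohomology tables of vector bundles are of type $\langle \beta(M), -\rangle_{\tau,\kappa}$ for an appropriate finite length pure module $M$.'' Your skeleton --- existence of supernatural bundles for every root sequence, the observation that natural cohomology plus the Hilbert polynomial pins down $\gamma(\sE)$ up to scalar, simplicial cones indexed by chains of root sequences, and facet functionals obtained by pairing with pure modules --- is indeed the approach of \cite{ES1} and is consistent with what the survey says. Two of your ingredients need correction, though. For existence, ``Veronese pullbacks of Horrocks-type constructions on $\PP^1$'' is not a construction that produces supernatural bundles with arbitrary root sequences; the elementary construction (given in Section~4 of this paper) is $\sE=\pi_*\sO(a_1,\ldots,a_m)$ for a \emph{finite linear projection} $\pi:\PP^1\times\cdots\times\PP^1\to\PP^m$, with the cohomology computed by the K\"unneth formula; the Schur-functor route of \cite{EFW,ESW} works only in characteristic $0$.

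The more serious gap is in your third step. On the cohomology side the greedy coefficient is \emph{not} ``the largest rational multiple that keeps the remainder entrywise nonnegative'': the root sequence $z$ to peel off is read from the positions where each row $h^j$ last fails to vanish, the coefficient $q$ is then forced by a specific corner entry, and the whole content of the proof is to show that for this forced $q$ the remainder $\gamma(\sE)-q\gamma(\sE_z)$ is still entrywise nonnegative \emph{and} still satisfies all the functional inequalities $\langle\beta(M),-\rangle_{\tau,\kappa}\ge 0$ needed to continue the induction. Writing ``evaluating this pairing \dots\ confirms positivity for the relevant facets and hence nonnegativity for the greedy remainders'' asserts exactly the conclusion without supplying the mechanism: one must exhibit, for each potential facet of the simplicial fan of root-sequence chains, a specific finite-length pure module $M$ (whose existence is part (1) of Theorem~\ref{bettimain}) such that $\langle\beta(M),-\rangle_{\tau,\kappa}$ vanishes on that facet, is positive on the omitted ray, and is nonnegative on every cohomology table by the Positivity theorems --- the exact mirror of the argument carried out for Betti tables in the ``Final part of the Proof of Theorem~\ref{bettimain}.'' You should also say why the algorithm terminates: for a vector bundle the intermediate cohomology lives in a bounded range of twists by Serre vanishing and duality, so only finitely many root sequences can occur and each step strictly decreases the top one. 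As written, the proposal identifies the right architecture but leaves the decisive quantitative step --- the construction and verification of the facet functionals from pure modules --- as an assertion rather than a proof.
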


\noindent
Here we order the root sequences component wise. \medskip

The crucial new concept is the following pairing between Betti tables of modules and cohomology tables of coherent sheaves.
We define $\langle \beta, \gamma \rangle$ for a  Betti table $\beta=(\beta_{i,k})$ and a cohomology table $\gamma=(\gamma_{j,k})$ by
$$\langle \beta, \gamma \rangle = \sum_{i \ge j} (-1)^{i-j} \sum_k \beta_{i,k}\gamma_{j,-k}$$

\begin{theorem}[Positivity 1,\cite{ES1,ES2}]\label{pos1}
For $\bF$ any free resolution of a finitely generated graded $K[x_0,\ldots,x_m]$-module $M$
and $\mathcal E$ any coherent sheaf on $\mathbb P^m$, we have
$$\langle \beta(\bF), \gamma(\mathcal E) \rangle \ge 0.$$
Moreover, if $M$ has finite length and
$H^{i+1}(\widetilde F_{i} \otimes \sE)=0$ for all $i \ge 0,$
then 
$$\langle \beta(\bF), \gamma(\mathcal E) \rangle = 0.$$

\end{theorem}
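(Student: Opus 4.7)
The plan is to realize the pairing $\langle\beta(\bF),\gamma(\sE)\rangle$ as the partial Euler characteristic of a hypercohomology spectral sequence attached to the sheaf complex $\widetilde{\bF}\otimes\sE$ on $\PP^m$, and to exploit that such partial Euler characteristics can only decrease across the pages of a convergent spectral sequence. Writing $\widetilde{F_i}\otimes\sE=\bigoplus_k\sE(-k)^{\beta_{i,k}}$ gives $h^j(\widetilde{F_i}\otimes\sE)=\sum_k\beta_{i,k}\gamma_{j,-k}$, so
$$\langle\beta(\bF),\gamma(\sE)\rangle=\sum_{i\geq j\geq 0}(-1)^{i-j}\,h^j(\PP^m,\widetilde{F_i}\otimes\sE).$$

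I would then form the \v{C}ech double complex $K^{p,q}=\check{C}^q(\widetilde{F}_{-p}\otimes\sE)$ for the standard affine cover of $\PP^m$, with horizontal differential coming from $\widetilde{\bF}$ and vertical from \v{C}ech. Taking column cohomology first yields a convergent spectral sequence with $E_1^{p,q}=H^q(\widetilde{F}_{-p}\otimes\sE)$ abutting to the hypercohomology $\mathbb{H}^{p+q}(\widetilde{\bF}\otimes\sE)$, and under the substitution $(p,q)=(-i,j)$ the pairing is precisely the partial Euler characteristic
$$\chi_{\leq 0}(E_1):=\sum_{p+q\leq 0}(-1)^{p+q}\dim E_1^{p,q}.$$

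The key monotonicity is $\chi_{\leq 0}(E_r)\geq\chi_{\leq 0}(E_{r+1})$: since $d_r\colon E_r^{a,b}\to E_r^{a+r,b-r+1}$ raises $p+q$ by one, a differential crosses the antidiagonal $p+q=0$ only when its source lies on it, and a direct kernel-image count gives $\chi_{\leq 0}(E_r)-\chi_{\leq 0}(E_{r+1})=\sum_{a+b=0}\mathrm{rank}\,d_r^{a,b}\geq 0$. Hence $\langle\beta,\gamma\rangle=\chi_{\leq 0}(E_1)\geq\chi_{\leq 0}(E_\infty)=\sum_{n\leq 0}(-1)^n\dim\mathbb{H}^n$. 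When $M$ has finite length, $\widetilde{M}=0$ forces the locally free sheaf complex $\widetilde{\bF}\otimes\sE$ to be acyclic, every $\mathbb{H}^n$ vanishes, and positivity follows at once. The moreover statement drops out in the same setting: if also $H^{s+1}(\widetilde{F_s}\otimes\sE)=0$ for every $s\geq 0$, then the target of each outgoing boundary differential $d_r\colon E_r^{-i,i}\to E_r^{-i+r,i-r+1}$ (with $r\leq i$) is a subquotient of $H^{i-r+1}(\widetilde{F}_{i-r}\otimes\sE)=0$, so every boundary rank vanishes and $\chi_{\leq 0}(E_1)=\chi_{\leq 0}(E_\infty)=0$.

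The main obstacle is establishing $\chi_{\leq 0}(E_\infty)\geq 0$ for an arbitrary finitely generated $M$, since then $\widetilde{M}\neq 0$ and the hypercohomology need not vanish. To dispatch this step I would invoke the second spectral sequence of the same bicomplex, namely $E_2^{p,q}=H^p(\PP^m,\mathcal{T}or_{-q}^{\sO}(\widetilde{M},\sE))\Rightarrow\mathbb{H}^{p+q}$, and run the parallel boundary-rank bound to reduce the positivity of $\chi_{\leq 0}(E_\infty)$ to a partial Euler characteristic of cohomologies of Tor sheaves; iterating with a sufficiently generic regular sequence of linear forms on $M$ eventually brings one back to the finite-length case already handled.
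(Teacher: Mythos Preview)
Your spectral-sequence formulation is essentially the approach the paper itself calls ``our original proof'' (it reappears explicitly in the proof of the next theorem, Positivity~2, via the same \v{C}ech double complex). The monotonicity $\chi_{\leq 0}(E_r)\geq\chi_{\leq 0}(E_{r+1})$ is correct and packages the argument cleanly. For a vector bundle $\sE$ the second (Tor) spectral sequence collapses, so $\mathbb{H}^n=H^n(\widetilde M\otimes\sE)$ vanishes for $n<0$ and $\chi_{\leq 0}(E_\infty)=\dim H^0(\widetilde M\otimes\sE)\geq 0$. The finite-length case and the ``moreover'' clause are also fine: a bounded exact complex of locally free sheaves is locally split, hence remains exact after tensoring with any $\sE$, and your check that each outgoing $d_r$ on the antidiagonal lands in a subquotient of $H^{s+1}(\widetilde F_s\otimes\sE)$ is correct.

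The genuine gap is your last paragraph, the case of arbitrary $M$ together with a non--locally-free $\sE$. The boundary-rank inequality applied to the Tor spectral sequence yields $\chi_{\leq 0}(E_2^{\mathrm{Tor}})\geq\chi_{\leq 0}(E_\infty)$, which is the wrong direction: you need a \emph{lower} bound on $\chi_{\leq 0}(E_\infty)$, not an upper one. And modding $M$ out by a generic regular sequence of linear forms does not help---over $S$ it changes the Betti table, while over $S/(x_1,\dots,x_c)$ it changes the ambient $\PP^m$ on which $\sE$ lives, so neither reduces the pairing to one already controlled. The paper's fix is different and short: since $\langle\beta(\bF),\gamma(\sE)\rangle$ depends only on $\gamma(\sE)$, one replaces $\sE$ by a general translate $g^*\sE$ with $g\in PGL(m+1)$; by the Kleiman--Bertini theorem for sheaf tensor products (Miller--Speyer) this forces $Tor_i(\widetilde M,g^*\sE)=0$ for $i>0$, restoring exactness of $\widetilde\bF\otimes g^*\sE$ and reducing to the case you have already handled.
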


\noindent
Note that if $\widetilde F_i= \oplus_{j\in \ZZ} \sO(-j)^{\beta_{i,j}}$ then
$$\langle \beta(\bF), \gamma(\mathcal E) \rangle=\sum_{i \ge j} (-1)^{i-j} h^j(\widetilde F_i\otimes \sE)$$

\begin{proof}
We first treat the case where $\sE$ is a vector bundle. In this case we
have an exact complex 
$$
0 \lTo \sM_0 \lTo \widetilde F_0\otimes \sE \to \widetilde F_1\otimes \sE \lTo  \ldots \lTo \widetilde F_r\otimes \sE \lTo 0
$$
with $\sM_0= \widetilde M \otimes \sE$. Breaking it up in short exact sequences
$$
\begin{matrix} 
0 &\lTo& \sM_0 &\lTo &\widetilde F_0\otimes \sE& \lTo& \sM_1 &\lTo &0\cr
\cr
0 &\lTo& \sM_1 &\lTo &\widetilde F_1\otimes \sE& \lTo& \sM_2 &\lTo &0\cr
\cr
0&\lTo& \sM_2& \lTo& \widetilde F_2\otimes \sE& \lTo& \sM_3 &\lTo& 0 \cr
&&&&\vdots
\end{matrix}$$
we get the desired functional by taking the alternating sum of the  Euler characteristics of initial parts of the corresponding long exact sequences in cohomology:
$$
\begin{matrix} 
 && &H^0(\widetilde F_0\otimes \sE)&\lTo& H^0(\sM_1) &\lTo& 0 \cr
\cr
&&&H^1(\widetilde F_1\otimes \sE)& \lTo& H^1(\sM_2) & \lTo\cr
 &H^0(\sM_1)& \lTo &H^0(\widetilde F_1\otimes \sE)&\lTo& H^0(\sM_2) &\lTo& 0 \cr
\cr
&&&H^2(\widetilde F_2\otimes \sE)& \lTo& H^2(\sM_3) & \lTo \cr
&H^1(\sM_2)&\lTo&H^1(\widetilde F_2\otimes \sE)& \lTo& H^1(\sM_3) & \lTo\cr
 &H^0(\sM_2)& \lTo &H^0(\widetilde F_2\otimes \sE)&\lTo& H^0(\sM_3) &\lTo& 0 \cr
&&&\vdots \cr
\end{matrix} 
$$
Hence, $\langle \beta(\bF), \gamma(\mathcal E)\rangle=\sum_{j=0}^m \dim \coker H^j(\sM_{j+1}) \to H^j(\widetilde F_j \otimes \sE)) \ge 0.$

In the general case, where $\sE$ is not necessarily locally free, the complex at the beginning of the proof may not be exact.
However, we note that what we need to prove depends only on the cohomology
table of $\sE$, not on the sheaf itself. Hence, we can replace $\sE$ with a translate $g^* \sE$ for any  $g \in PGL(m+1)$. When
$g$ is a general element,  \cite{MS} shows that the sheaves $Tor_{i}(\tilde M, \sE) = 0$ for $i>0$; that is, the complex
$$
0 \lTo \tilde M \otimes g^{*}\sE \lTo \widetilde F_0\otimes g^{*}\sE \to \widetilde F_1\otimes \sE \lTo  \ldots \lTo \widetilde F_r\otimes g^{*}\sE \lTo 0
$$
is exact, and the same argument applies.

For the vanishing statement, we note that in this case $\widetilde \bF \otimes \sE$ is exact as well, and 
$\sM_0=0$. By induction we obtain $H^i(\sM_i)=0$ from $H^i( \widetilde F_{i-1} \otimes \sE)=0$, and all the Euler characteristics are in fact zero.
 \end{proof}

\noindent
The facet equation in the example above is obtained from the vector bundle $\mathcal E$ on $\mathbb P^2\stackrel{\iota}{\hookrightarrow} \PP^3$, that is the kernel of a general map $\mathcal O_{\PP^2}^5(-1) \to \mathcal O_{\PP^2}^3$. The coefficients of the functional
 $ \langle - , \gamma(\iota_*\mathcal E)\rangle$ are 
$$
\begin{pmatrix}
\vdots    & \vdots & \vdots & \vdots & \vdots\\
21 &-12 & 5 & 0 &-3\\
12 & -5 & 0 & 3 &-4\\
5 & 0 & -3 & 4 &-3\\
  0 & 3 & -4& 3 & 0\\
 0 &  4&  -3&    0  & 5\\
  0 & 3& 0&   -5  & 12\\
  0 & 0& 5&   -12  &21\\
   0 & 0& 12&   -21 &32 \\
\vdots    & \vdots & \vdots & \vdots & \vdots\\
\end{pmatrix} 
$$
This is not quite the functional we wanted, which had zeros in place of some of the nonzero values. To correct this,
we define  ``truncated'' functionals $ \langle - , \gamma \rangle_{\tau,\kappa}$ by putting zero coefficients in the appropriate spots:
$$
\begin{matrix}
\langle \beta, \gamma \rangle_{\tau,\kappa}&=& \sum_{k\le \kappa} \beta_{\tau,k} \gamma_{\tau,-k}+
\sum_{j <\tau} \sum_k  \beta_{j,k} \gamma_{j,-k} \cr\cr
&&-\sum_{k\le \kappa+1} \beta_{\tau+1,k} \gamma_{\tau,-k}-\sum_{j <\tau} \sum_k \beta_{j+1,k} \gamma_{j,-k}\cr\cr
&&+ \sum_{i>j+1} (-1)^{i-j} \sum_k \beta_{i,k} \gamma_{j,-k}&\cr
\end{matrix}
$$

\begin{theorem}[Positivity 2, \cite{ES1,ES2}]
For $\bF$ the \underline {minimal} free resolution of a finitely generated graded $K[x_0,\ldots,x_m]$-module
and $\mathcal E$ any coherent sheaf on $\mathbb P^m$, we have
$$\langle \beta(\bF), \gamma(\mathcal E) \rangle_{\tau,\kappa} \ge 0.$$
\end{theorem}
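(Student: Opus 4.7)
The plan is to decompose $\langle\beta(\bF),\gamma(\sE)\rangle_{\tau,\kappa}$ into individually nonnegative contributions, applying Theorem~\ref{pos1} to $\bF$ itself for the ``outer'' slices $j<\tau$, and to an auxiliary submodule construction for the remaining slices.

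First, I would extract from the proof of Theorem~\ref{pos1} the stronger statement that each cohomological slice
$$\chi^{j}(M) \;=\; \sum_{i\ge j}(-1)^{i-j}\sum_k\beta_{i,k}(M)\gamma_{j,-k}(\sE) \;=\; \dim\coker\!\bigl(H^j(\sM_{j+1})\to H^j(\widetilde F_j\otimes\sE)\bigr)$$
is individually nonnegative. For $j<\tau$ the truncated functional coincides with the full pairing on that slice, so $\sum_{j<\tau}\chi^j(M)\ge 0$ gives the ``low-$j$'' part of the required positivity for free.

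For the slices with $j\ge\tau$, I would pass to the submodule
$$\Omega^{\le\kappa} \;\subseteq\; \Omega_\tau \;:=\; \ker(F_{\tau-1}\to F_{\tau-2})\qquad(\Omega_0:=M)$$
generated by the images of the basis elements of $F_\tau$ of degree $\le\kappa$. Minimality of $\bF$ gives $\beta_{0,k}(\Omega^{\le\kappa})=\beta_{\tau,k}(M)$ for $k\le\kappa$; using further that $F_{\tau+1}\to F_\tau$ lands in $\mathfrak{m}F_\tau$, so that generators of $F_{\tau+1}$ of degree $\le\kappa+1$ have images in $F_\tau^{\le\kappa}$, one obtains $\beta_{1,k}(\Omega^{\le\kappa})=\beta_{\tau+1,k}(M)$ for $k\le\kappa+1$. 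Applying Theorem~\ref{pos1} to the minimal free resolution of $\Omega^{\le\kappa}$ paired with a suitably twisted version of $\sE$---chosen so that its cohomological index realigns with $\tau$---and extracting the appropriate slice should reproduce both the truncated $i=\tau$ contributions (for $k\le\kappa$) and the truncated $i=\tau+1$ contributions (for $k\le\kappa+1$) in $\langle\beta(\bF),\gamma(\sE)\rangle_{\tau,\kappa}$, while the tail terms $\sum_{i\ge j+2}(-1)^{i-j}\beta_{i,k}(M)\gamma_{j,-k}$ for $j\ge\tau$ come from iterating the submodule construction on higher syzygies.

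The main obstacle is precisely the index realignment together with the bookkeeping for the ``extra'' syzygies of $\Omega^{\le\kappa}$ in degrees $>\kappa+1$. In Theorem~\ref{pos1} the cohomological index $j$ of $\sE$ runs independently of the homological shift $\tau$, so matching a cohomological-$\tau$ slice for $M$ against a cohomological-$0$ slice for $\Omega^{\le\kappa}$ requires a nontrivial reindexing---plausibly via a Serre-duality or $\mathrm{Ext}$-reinterpretation of the pairing, or via an iterative peeling of one homological degree at a time. Moreover, $\beta_{i,k}(\Omega^{\le\kappa})$ for $i\ge 2$, $k>\kappa+1$ may differ from $\beta_{i+\tau,k}(M)$ because of unforced cancellations (images of higher-degree generators of $F_{\tau+1},F_{\tau+2},\dots$ that accidentally land inside $F_\tau^{\le\kappa}$), and one must verify by a careful diagram chase that these extras combine with the original tail $F_{\tau+2},\dots,F_r$ of $\bF$ to produce exactly the asymmetric mixing of $\kappa$ and $\kappa+1$ appearing in $\langle\cdot,\cdot\rangle_{\tau,\kappa}$. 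As in the proof of Theorem~\ref{pos1}, replacing $\sE$ by $g^{*}\sE$ for general $g\in\mathrm{PGL}(m+1)$ reduces the coherent non-locally-free case to the locally free one.
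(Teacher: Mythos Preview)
Your proposal has a genuine error at its foundation. The claim that each cohomological slice
\[
\chi^{j}(M) \;=\; \sum_{i\ge j}(-1)^{i-j}\,h^j(\widetilde F_i\otimes\sE)
\]
equals $\dim\coker\bigl(H^j(\sM_{j+1})\to H^j(\widetilde F_j\otimes\sE)\bigr)$, and is therefore individually nonnegative, is false. Take $S=K[x_0,x_1]$, $M=K$ with its Koszul resolution $S\leftarrow S(-1)^2\leftarrow S(-2)\leftarrow 0$, and $\sE=\sO_{\PP^1}$. Then $\chi^1=h^1(\sO(-1)^2)-h^1(\sO(-2))=0-1=-1<0$. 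What the proof of Theorem~\ref{pos1} actually shows is that the \emph{sum} $\langle\beta,\gamma\rangle$ decomposes as $\sum_j\dim\coker(H^j(\sM_{j+1})\to H^j(\widetilde F_j\otimes\sE))$; those cokernel dimensions are nonnegative, but they are \emph{not} the $\chi^j$---the terms $h^{j'}(\sM_{j'+1})$ mix all cohomological degrees of the tail of $\bF$, so there is cross-talk between the slices. Since your ``low-$j$'' part $\sum_{j<\tau}\chi^j$ need not be nonnegative, the decomposition you propose collapses at step one.

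The paper's argument sidesteps this entirely: it never attempts to slice by $j$. Instead it forms the total complex $A_p=\bigoplus_{i-j=p}H^j(\widetilde F_i\otimes\sE)$ (exact for $p<0$), so that $\langle\beta,\gamma\rangle=\dim\coker(A_1\to A_0)$, and then realizes the truncation by passing to explicit subspaces $B_0\subset A_0$, $B_1\subset A_1$ obtained by dropping the summands with $j>\tau$ and, for $j=\tau$, the generators of degree $>\kappa$ (resp.\ $>\kappa+1$). Minimality of $\bF$ is used in exactly one place: it guarantees that the differential $A_1\to A_0$ carries $B_1$ into $B_0$, since a generator of $F_{\tau+1}$ of degree $\le\kappa+1$ maps into $\mathfrak m F_\tau$ and hence into the span of generators of degree $\le\kappa$. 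A two-line linear algebra computation then gives $\langle\beta,\gamma\rangle_{\tau,\kappa}=\dim\ker(A_1\to A_0)+\dim\coker(B_1\to B_0)-\dim\ker(B_1\to B_0)\ge 0$, because $\ker(B_1\to B_0)\subset\ker(A_1\to A_0)$. No auxiliary module $\Omega^{\le\kappa}$, no reindexing, and no control of higher syzygies is needed; the difficulties you flag in your second paragraph simply do not arise.
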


\begin{proof} We replace $\sE$ by a general translate as above,
 to achieve homological transversaltity to $\bF$.  Let $E$ be a graded module representing the sheaf $\sE$. Consider the \v Cech resolution $0 \to C^0 \to C^1 \to C^2 \to \ldots$ of $E$ with $C^p= \oplus_{i_0< \ldots,i_p} E[x_{i_0}^{-1}, \ldots,x_{i_p}^{-1}]$ and the tensor product

$$
\begin{matrix}
    &         &  \vdots && \vdots && \vdots & \cr
    &         &  \uparrow && \uparrow && \uparrow & \cr
0 & \lTo &F_0 \otimes C^2  &\lTo &  F_1 \otimes C^2 & \lTo & F_2 \otimes C^2  & \lTo &\ldots \cr
    &         &  \uparrow && \uparrow && \uparrow & \cr
0 & \lTo &F_0 \otimes C^1  &\lTo &  F_1 \otimes C^1 & \lTo & F_2 \otimes C^1  & \lTo &\ldots \cr
    &         &  \uparrow && \uparrow && \uparrow & \cr
0 & \lTo &F_0 \otimes C^0  &\lTo &  F_1 \otimes C^0 & \lTo & F_2 \otimes C^0  & \lTo &\ldots \cr
    &         &  \uparrow && \uparrow && \uparrow & \cr
      &         &  0 && 0 && 0& \cr  
\end{matrix}
$$
of complexes.

By the homological transversality the horizontal cohomology is concentrated in the $F_0$-column. Hence, the total complex has homology  only in non-negative cohomological degrees. The vertical cohomology  in internal degree $0$ on the diagonal or below are the groups 
$$
\begin{matrix}
\; & &\;& &\;& H^2(\widetilde F_2\otimes \sE) \cr
\; & &\;&H^1(\widetilde F_1\otimes \sE) &\;& H^1(\widetilde F_2\otimes \sE) \cr
\; & H^0(\widetilde F_0\otimes \sE)&\;& H^0(\widetilde F_1\otimes \sE)&\;& H^0(\widetilde F_2\otimes \sE) \cr
\end{matrix}
$$
The Euler characteristic of this diagram is again
$\langle \beta(\bF), \gamma(\sE) \rangle$. If we split the internal degree $0$ part of the spectral sequence $H_{vert}(C\otimes F) \Rightarrow H_{tot}(C\otimes F)$ as a sequence of $K$-vector spaces, then we  obtain a complex
 $$\ldots \lTo \;A_p=\bigoplus_{i-j=p} H^j(\widetilde F_i \otimes \sE)\quad \lTo \bigoplus_{i-j=p+1} A_{p+1}=H^j(\widetilde F_{i}\otimes \sE) \; \lTo \ldots$$
 that is exact in negative cohomological degrees. Note that this gives a different proof  (essentially our original proof) of part of the positivity result of Theorem \ref{pos1}, since
  $\langle \beta(\bF), \gamma(\sE) \rangle= \dim \coker(A_1 \to A_0)\ge 0$.  
  
  Consider the submodules
$$B_0 = \bigoplus_{j<\tau} H^j(\widetilde F_j \otimes \sE) \oplus \bigoplus_{k \le \kappa} H^\tau( \sO(-k)^{\beta_{\tau,k}}\otimes \sE)\subset A_0$$
and
$$B_{1} = \bigoplus_{j<\tau} H^j(\widetilde F_{j+1} \otimes \sE) \oplus \bigoplus_{k \le \kappa+1} H^\tau( \sO(-k)^{\beta_{\tau+1,k}}\otimes \sE)\subset A_{1}$$
corresponding  to the truncation.
The diagram
$$
\begin{matrix}
A_0 & \lTo &A_{1} \cr
\uparrow && \uparrow \cr
B_0 & \lTo& B_{1} \cr
\end{matrix}
$$
commutes, because $\bF$ is minimal. Hence,
\begin{eqnarray*}
\langle \beta(\bF), \gamma(\sE)\rangle_{\tau,\kappa}&= &\langle \beta(\bF), \gamma(\sE)\rangle-\dim A_0+\dim B_0+\dim A_1-\dim B_1\cr
&= &\dim \ker(A_1 \to A_0)+ \dim \coker(B_1\to B_0)\cr &&-\dim \ker(B_1 \to B_0) \ge 0,
\end{eqnarray*}
 because $\ker(B_1\to B_0) \subset \ker(A_1 \to A_0)$.
\end{proof}

These stronger versions of the vanishing results of \cite{ES1} allow us to give a direct proof of the
 main theorem of \cite{BS2}:

\begin{proof}[Final part of the Proof of Theorem \ref{bettimain}]
The facet equation, which cuts out the desired face corresponding to a degree sequence $a>b>c$ with $c$ of length $r$ that only differ in positions $\tau$ and $\tau+1\le r$ and satisfies $a_\tau \ge c_{\tau+1}$, is given by taking a supernatural vector bundle
$\sE$  on $\PP^{r-1}\subset \PP^{n-1}$ with root sequence
$(z_1>z_2> \ldots >z_{r-1})=(-b_0>\ldots> -b_{\tau-1} >-b_{\tau+2}>\ldots>-b_r)$, $\kappa=c_{\tau+1}-1 $ and the functional
$$\langle-, \gamma(\sE)\rangle_{\tau,\kappa}.$$ 
Indeed, for a pure module $M$  with a degree sequence $d \le c$, we have
$$\langle \beta(M),\gamma(\sE) \rangle_{\tau,\kappa} =\langle \beta(M),\gamma(\sE) \rangle,$$
and the vanishing follows from Theorem \ref{pos1}: Since the length of $d$ is at least the length of $c$ we can reduce to the case where $M$ has finite length, because the Betti numbers of $M$ and $M/xM$ as an 
$S/xS$-module for a linear nonzero divisor $x$ of $M$ coincide. Furthermore,
$H^{i+1}(\sE(-d_i))=0$ because $-d_i\ge -c_i\ge z_{i+1}$ or $i+1\ge r$. 

The vanishing $\langle \beta(M),\gamma(\sE) \rangle_{\tau,\kappa} =0$ for all pure $M$ with a degree sequence $d \ge a$ is trivially true by our choice of $\sE$ and the truncation.

Finally, $\langle \beta(b), \gamma(\sE) \rangle_{\tau, \kappa} >0$, because $z_{\tau}=-b_{\tau-1}>-b_\tau =-c_{\tau}\ge 
-\kappa=-c_{\tau+1}+1> -c_{\tau+1}>-c_{\tau+2}=-b_{\tau+2}=z_{\tau+1}$ and hence 
$H^\tau(\sE(-b_\tau)) \not=0$. Thus $\langle-, \gamma(\sE)\rangle_{\tau,\kappa}=0$ cuts out the desired face. 
  \end{proof}

Conversely, the essential  facet equations of the cone of cohomology tables of vector bundles are of type $\langle \bF(M), -\rangle_{\tau,\kappa}$ for an appropriate finite length pure module $M$, see \cite{ES1}.

\section{Existence}

To complete the proof of both Boij-S\"oderberg decompositions, it is now enough to establish the existence of supernatural vector bundles and pure resolutions for arbitrary root or degree sequences. In each case there are two methods known. For equivariant resolutions or homogeneous vector bundles in characteristic 0 one can use 
Schur functors \cite{EFW,ES-C,ESW}. For arbitrary fields, one can use  a push down method \cite{ES1}. For bundles this is a simple application of the K\"unneth formula applied to $\mathcal E=\pi_* \mathcal O(a_1,\ldots,a_m)$, where $\pi$ is a finite linear projection 
$\pi: \mathbb P^1 \times \ldots \times \mathbb P^1 \to \mathbb P^m$ and $\mathcal O(a_1,\ldots, a_m)$ is a suitable line bundle on the product.

For resolutions this is an iteration of the Lascoux method \cite{Las} to get the Buchsbaum-Eisenbud family of complexes associated to generic matrices \cite{BE1}: We start with $\mathcal K$, 
a Koszul complex on $ \mathbb P^{r-1}\times \mathbb P^{m_1} \times \ldots \times \mathbb P^{m_s} $ of  $r+\sum_{i=1}^s m_i$ forms of multidegree $(1,\ldots,1)$ tensored with  $\mathcal O(-d_0,a_1,\ldots,a_s)$. Here $s$ is the number of desired non-linear maps and $m_j+1$ is the desired degree of the $j$-th non-linear map. 
The spectral sequence for $R\pi_* \mathcal K$ of the projection $ \pi: \mathbb P^{r-1}\times \mathbb P^{m_1} \times \ldots \times \mathbb P^{m_s} \to \mathbb P ^{r-1}$ gives rise to the desired complex if we choose $a_1,\ldots ,a_s$
suitably. Indeed, we may apply Proposition \ref{pushdown construction} $s$-times: 
For any product $X_1 \times X_2$ with projections $p:X_1 \times X_2 \to X_1$ and $q:X_1 \times X_2 \to X_2$ and sheaves $\sL_i$ on $X_i$, we set
$$
\sL_1 \boxtimes \sL_2 := p^*\sL_1 \otimes q^* \sL_2.
$$

\begin{prop}
\label{pushdown construction} 
Let $\sF$ be a sheaf on $X \times \PP^m$, and let $p:X\times \PP^m \to X$
be the projection. Suppose that $\sF$ has a resolution of the form
$$
0 \to \sG_N \boxtimes \sO(-e_N) \to \cdots \to  \sG_0 \boxtimes \sO(-e_0) \to \sF \to 0
$$
with degrees $e_0< \cdots < e_N$. If this sequence
 contains the subsequence $(e_{k+1},\ldots,e_{k+m}) \\=(1,2,\ldots, m)$ for some $k\ge 0$ then 
$$
R^\ell p_*\sF=0 \hbox{ for } \ell>0
$$
and $p_* \sF$ has a resolution on $X$ of the form
\begin{align}
 0 \to \sG_N \otimes H^m\sO(-e_N) &\to \cdots  \cr
  \to \sG_{k+m+1}& \otimes H^m \sO(-e_{k+m+1} ) \stackrel{\phi}{\to}  \sG_{k} \otimes H^0 \sO(-e_{k}) \to \cr
&\qquad\qquad\qquad\qquad\qquad\qquad
 \cdots \to \sG_0 \otimes H^0 \sO(-e_0)
\end{align}
\end{prop}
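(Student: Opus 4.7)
\emph{Plan.} The plan is to compute $Rp_*\sF$ via the hyper-direct-image spectral sequence applied to the given resolution, viewed as a complex $\sK^\bullet$ on $X\times\PP^m$ in non-positive degrees, with $\sK^{-i}=\sG_i\boxtimes\sO(-e_i)$ for $0\le i\le N$. Since $\sK^\bullet$ is quasi-isomorphic to $\sF$ we have $Rp_*\sF \simeq Rp_*\sK^\bullet$, and the relevant spectral sequence is
$$E_1^{p,q} = R^qp_*\sK^p \;\Longrightarrow\; R^{p+q}p_*\sF.$$
By the projection formula, $E_1^{p,q} = \sG_{-p}\otimes H^q(\PP^m,\sO(-e_{-p}))$, and the cohomology of $\sO(-e)$ on $\PP^m$ is nonzero only in degree $0$ (when $e\le 0$) or degree $m$ (when $e\ge m+1$), and vanishes entirely for $1\le e\le m$.

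The first step is to read off, using the hypothesis $(e_{k+1},\ldots,e_{k+m})=(1,2,\ldots,m)$ together with the strict monotonicity of the $e_i$, that the $E_1$-page is supported on exactly two rows: row $q=0$ in columns $-k\le p\le 0$ with entry $\sG_{-p}\otimes H^0\sO(-e_{-p})$, and row $q=m$ in columns $-N\le p\le -(k+m+1)$ with entry $\sG_{-p}\otimes H^m\sO(-e_{-p})$. Every nonzero entry has total degree $p+q\le 0$—in fact row $q=m$ lies entirely in total degree $\le -k-1<0$—so $R^\ell p_*\sF=0$ for $\ell>0$, giving the first assertion.

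For the resolution statement, I observe that the two supported rows are separated by $m-1$ empty rows, so all differentials $d_r$ with $2\le r\le m$ vanish and the only higher differential is $d_{m+1}\colon E_{m+1}^{p,m}\to E_{m+1}^{p+m+1,0}$; after this the spectral sequence degenerates. The $d_1$-differentials within each row are exactly those induced from the resolution's maps $\sG_i\to\sG_{i-1}$, so the two rows realize, at the level of complexes of sheaves on $X$, the ``$H^m$-half'' $[\sG_N\otimes H^m\sO(-e_N)\to\cdots\to\sG_{k+m+1}\otimes H^m\sO(-e_{k+m+1})]$ and the ``$H^0$-half'' $[\sG_k\otimes H^0\sO(-e_k)\to\cdots\to\sG_0\otimes H^0\sO(-e_0)]$ of the conjectured resolution. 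Equivalently, the brutal truncations of $\sK^\bullet$ at degree $-(k+1)$ fit in a distinguished triangle $Rp_*\sK^\bullet_{\le -(k+1)}\to Rp_*\sF\to Rp_*\sK^\bullet_{\ge-k}$ whose outer terms are quasi-isomorphic to these two halves; the connecting morphism is precisely $d_{m+1}$, and a chain-level lift provides the map $\phi$.

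The heart of the argument, and the main technical obstacle, will be to produce $\phi$ explicitly so that the splice $\sL^\bullet$ of the two halves by $\phi$ is an honest chain complex, i.e., $d_1\circ\phi=0$ and $\phi\circ d_1=0$. One arranges this by realizing $\phi$ via a Cartan--Eilenberg injective resolution of the $H^m$-half, so that the derived-category connecting morphism lifts uniquely to a chain-level map. Once $\sL^\bullet$ is a genuine complex, convergence of the spectral sequence identifies $H^\bullet(\sL^\bullet)$ with $R^\bullet p_*\sF$, which vanishes outside degree $0$ (by the first assertion together with the fact that $\sF$ is a sheaf) and equals $p_*\sF$ there. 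Hence $\sL^\bullet$ is the asserted resolution of $p_*\sF$.
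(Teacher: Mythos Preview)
Your argument is correct and is essentially the same as the paper's: both compute $Rp_*\sF$ via the hypercohomology spectral sequence of the resolution, use the projection formula to see that the $E_1$-page is supported on the rows $q=0$ and $q=m$ in the indicated column ranges, and conclude that the only surviving differentials are the $d_1$'s within each row together with the single higher differential $\phi$ joining them. Your treatment of $\phi$ via Cartan--Eilenberg resolutions and brutal truncations is more elaborate than needed---since $d_{m+1}$ is by construction a map from $\mathrm{coker}(d_1)$ at the right end of the $H^m$-row to $\ker(d_1)$ at the left end of the $H^0$-row, precomposing with the projection and postcomposing with the inclusion already gives a chain-level $\phi$ with $d_1\circ\phi=0=\phi\circ d_1$---but the conclusion is the same.
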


\begin{proof} From the numerical hypotheses we see that $e_i\leq 0$ for $i\leq k$
and $e_i\geq m+1$ for $i\geq k+m+1$.
Consider the spectral sequence
$$
E^{i,-j}_1=R^ip_* (\sG_j\boxtimes \sO(-e_j)) \Rightarrow R^{i-j} p_* \sF.
$$
By the projection formula, the terms of the $E_1$ page are
$$
R^ip_* (\sG_j\boxtimes \sO(-e_j))=
\begin{cases}
\sG_j\otimes H^m(\PP^m,\sO(-e_j)) &\hbox{ if } j\geq k+m+1 \hbox{ and } i=m \cr
\sG_j \otimes H^0(\PP^m, \sO(-e_j)) &\hbox{ if } j\le k \hbox{ and } i=0 \cr
0 & \hbox{ otherwise. }
\end{cases}
$$
Thus, the spectral sequence degenerates to the complex (1), where
$\phi$ is a differential from the $m$-th page 
and the other maps are differentials from the first page.
In particular, only terms  $E^{i,-j}_{\infty}$ with  $i\leq j$ can be nonzero. 
On the other hand, the terms $R^{i-j} p_* \sF$  can be nonzero only for  $i\ge j$. 
Hence, the complex (1) is exact and resolves 
$\oplus_{i\ge 0} E^{i,-i}_\infty=E^{0,0}_\infty= p_* \sF$, 
while the higher direct images of $\sF$ vanish.
\end{proof}

\section{Applications, Extensions of the Basic Theory and Open Questions}

The application that motivated Boij and S\"oderberg to make their Conjecture, was the following sharp
version of the Multiplicity Conjecture of Huneke and Srinivasan \cite{HS}.

\begin{theorem}[\cite{BS2}] For any finitely generated module $M$ of projective dimension $r$ and codimension $s$ generated in degree $0$, we have the following bounds for the Hilbert series:
$$
(\prod_{i=1}^r a_i)\,{H ( \beta(a), t) }
\le
\frac{H (M, t)}{\beta_{0,0}(M)}
\le 
(\prod_{i=1}^s b_i)\,H ( \beta(b), t) \; ,
$$
where $a=(0,a_1 , a_2 , . . . , a_r)$ are the minimal shifts and $b=(0, b_1 , b_2 , . . . , b_s)$ are the maximal shifts in 
a minimal free resolution of $M$. Equality on either side implies that the resolution is pure. 
In particular, the right hand inequality implies the Multiplicity Conjecture, that is, the multiplicity of $M$ is bounded by
$$\mult(M )
 \le \beta_{0,0} (M )\, \frac{b_1\cdot \ldots \cdot b_s}{s!}
$$
with equality if and only if $M$ is Cohen-Macaulay with a pure resolution. 

\end{theorem}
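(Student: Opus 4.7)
The plan is to apply the Boij--S\"oderberg decomposition of Theorem \ref{bettimain} to $\beta(M)$ and reduce the desired Hilbert-series bounds to a monotonicity statement about pure diagrams along the resulting chain.

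First, I would decompose $\beta(M)=\sum_\ell q_\ell\,\beta(d^{(\ell)})$ with $q_\ell>0$ and $\{d^{(\ell)}\}$ a totally ordered chain. Because all pure summands have nonnegative entries, every position supporting some $\beta(d^{(\ell)})$ is already in the support of $\beta(M)$; combined with pole-order analysis of $H(M,t)$ at $t=1$, which forbids any pure summand of codimension $<s$, this forces $a$ to be the minimum and $b$ the maximum of the chain, with $a\le d^{(\ell)}\le b$ for every $\ell$. Rescaling via $\hat\beta(d):=\beta(d)/\beta_{0,0}(d)$ and setting $p_\ell:=q_\ell\beta_{0,0}(d^{(\ell)})/\beta_{0,0}(M)$ turns the decomposition into a convex combination
$$
\frac{\beta(M)}{\beta_{0,0}(M)}=\sum_\ell p_\ell\,\hat\beta(d^{(\ell)}),\qquad \sum_\ell p_\ell=1,
$$
and applying the linear map $\beta\mapsto H(\beta,t)$ yields the corresponding identity for Hilbert series.

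The heart of the argument is then a monotonicity lemma: for degree sequences $d\le d'$ with $d_0=d'_0=0$, the difference $H(\hat\beta(d'),t)-H(\hat\beta(d),t)$ has only nonnegative Taylor coefficients, and has at least one strictly positive coefficient whenever $d\ne d'$. I would prove this by reducing to adjacent sequences in a chain---either differing by a single increment of one, or differing in two consecutive positions in the manner described in the facet analysis of Section 3---and in each case computing the change in the Herzog--K\"uhl numerator explicitly. This change is a short polynomial divisible by $(1-t)^r$; what must be verified is that after dividing by $(1-t)^n$ the resulting power series has nonnegative coefficients. This direct combinatorial verification is where I expect the main technical obstacle.

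Granting the lemma, each term in the convex combination satisfies $H(\hat\beta(a),t)\le H(\hat\beta(d^{(\ell)}),t)\le H(\hat\beta(b),t)$, so averaging produces the claimed Hilbert-series bounds after substituting $H(\hat\beta(a),t)=(\prod_{i=1}^r a_i)\,H(\beta(a),t)$ and $H(\hat\beta(b),t)=(\prod_{i=1}^s b_i)\,H(\beta(b),t)$. Equality on either side forces all the $H(\hat\beta(d^{(\ell)}),t)$ to agree, which by strict monotonicity collapses the decomposition to a single pure term, so $M$ has a pure resolution; equality on the right additionally forces that pure sequence to have length $s+1$, so $M$ is Cohen--Macaulay. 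Finally the multiplicity bound follows by multiplying the right-hand Hilbert-series inequality by $(1-t)^{n-s}$ and evaluating at $t=1$: the left side becomes $\mult(M)$, while the right side becomes $\beta_{0,0}(M)\,b_1\cdots b_s\cdot\mult(\beta(b))$, and $\mult(\beta(b))=1/s!$ is read directly from the Herzog--K\"uhl numerator of $\beta(b)$, giving $\mult(M)\le\beta_{0,0}(M)\,b_1\cdots b_s/s!$.
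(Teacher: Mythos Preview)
Your proposal is correct and follows essentially the same route as the paper's sketch: decompose via Theorem~\ref{bettimain}, normalize so the decomposition becomes a convex combination with $\beta_{0,0}=1$, and invoke monotonicity of the normalized Hilbert series $H(\hat\beta(d),t)$ along chains of degree sequences. One small correction: adjacent degree sequences in a maximal chain differ either by a unit increment in a single position or by dropping the last entry (passing from codimension $r$ to $r-1$), not by the two-position moves from the facet analysis of Section~3, so your case split for the monotonicity lemma should be adjusted accordingly.
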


\begin{proof}[Sketch] If $ a$  is a degree sequence of length $r$ with $a_0=0$, then the Betti table $(\prod_{i=1}^r a_i)\,\beta(a)$ is normalized such that $\beta_{0,0}=1$.
Given two degree sequences $a <b$ with $a_0=b_0=0$, then
the Hilbert series of the normalized tables satisfy
$$H((\prod_{i=1}^r a_i) {\beta(a),t)}< {H((\prod_{i=1}^s b_i)\beta(b),t)}.$$
The result follows because the normalized Boij-S\"oderberg decomposition  is a convex combination.
\end{proof}

Let $a < b$ be two degree sequences of equal length.
The part of the Boij-S\"oderberg cone of tables $\beta=(\beta_{i,j})$ with
$\beta_{i,j}=0 \hbox{ unless }a_i \le j \le b_i$ is a finite,  equi-dimensional simplical fan.

Turning to the monoid of Betti tables of modules, we have:

\begin{theorem}[Erman, \cite{Erman1}] The monoid of Betti tables of Cohen-Macaulay modules with Betti tables bounded by the degree sequences $a < b$ is finitely generated.
\end{theorem}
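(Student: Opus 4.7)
The plan is to reduce the question to a finite-dimensional problem via the Boij-S\"oderberg cone, and then extract finite generation of the monoid by a lattice-theoretic argument combining Gordan's and Dickson's Lemmas.

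First, since every Betti table with shifts in the range $[a,b]$ is supported in the finite set of positions $\{(i,j)\mid a_i\le j\le b_i,\ 0\le i\le r\}$, we work inside a finite-dimensional $\QQ$-vector space $V$. By Theorem~\ref{bettimain}, the rational cone $C$ of Boij-S\"oderberg tables with shifts in this range is the union of the finitely many simplicial cones indexed by maximal chains in the finite poset of degree sequences $d$ with $a\le d\le b$; in particular $C$ is a rational polyhedral cone of finite dimension. Applying Gordan's Lemma, the monoid $L:=C\cap\Lambda$ of integer-valued tables in $C$, where $\Lambda\subset V$ is the natural integer lattice, is finitely generated as a commutative monoid.

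The monoid $M$ of Betti tables of Cohen-Macaulay modules with shifts in $[a,b]$ is a sub-monoid of $L$, and in particular of $\mathbb{N}^{\dim V}$. By Dickson's Lemma, the set of coordinatewise-minimal elements of $M\setminus\{0\}$ is finite; call these $\mu_1,\ldots,\mu_s$. The remaining task is to show that $\mu_1,\ldots,\mu_s$ generate $M$ as a monoid; equivalently, that every non-minimal $\beta(N)\in M$ decomposes as $\beta_1+\beta_2$ with $\beta_i\in M\setminus\{0\}$.

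This last step is the main obstacle, since a sub-monoid of a finitely generated monoid need not itself be finitely generated: being an integer point of $C$ is strictly weaker than being the Betti table of an actual module. The strategy is to combine the Boij-S\"oderberg decomposition $\beta(N)=\sum_\ell q_\ell\,\beta(d^\ell)$ with the explicit pure-module constructions of Section~4. For each degree sequence $d$ in $[a,b]$, fix a Cohen-Macaulay pure module $N_d$ realizing an integer multiple of $\beta(d)$. If $\beta(N)$ coordinatewise dominates some $\mu_j\in M$, one hopes to realize the difference $\beta(N)-\mu_j$ as the Betti table of some actual module by combining direct sums $\bigoplus_\ell N_{d^\ell}^{\oplus n_\ell}$ with a flat-deformation or base-change argument that transports the numerics of an integer-linear combination of pure diagrams to a genuine module. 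Carrying out this realization step, which is the technical heart of~\cite{Erman1}, upgrades the finite list $\mu_1,\ldots,\mu_s$ furnished by Dickson's Lemma to an actual generating set of the monoid $M$.
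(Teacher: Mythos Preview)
The paper does not prove this theorem but only cites Erman, so let me evaluate your argument on its own terms. Your setup through Gordan's Lemma is fine, but the Dickson's Lemma route leads to a genuine gap that your ``realization step'' does not close. For a non-minimal $\beta(N)\in M$ dominating some minimal $\mu_j$, the difference $\beta(N)-\mu_j$ has non-negative entries but need not lie in the Boij--S\"oderberg cone $C$ at all: coordinatewise domination is far weaker than the facet inequalities defining $C$, so this table may violate the Herzog--K\"uhl relations or the supporting hyperplanes of Section~3. No deformation or base-change can manufacture a module whose Betti table lies outside $C$, so the strategy of peeling off a minimal element and realizing the remainder is not viable in general. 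Your description of this as ``the technical heart of~\cite{Erman1}'' is also inaccurate; Erman does not argue by subtracting minimal elements.

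Erman's approach avoids subtraction inside $M$ altogether and works from below. By part~1 of Theorem~\ref{bettimain}, every extremal ray of $C$ carries the Betti table of an actual pure Cohen--Macaulay module; let $M_0\subseteq M$ be the finitely generated sub-semigroup these tables generate. Because the $\beta(N_d)$ sit on every extremal ray of the finite simplicial fan, one checks simplex by simplex that $L=C\cap\Lambda$ is covered by finitely many translates of $M_0$. Passing to semigroup algebras, $K[L]$ is then a finite module over the Noetherian ring $K[M_0]$, so the intermediate $K[M_0]$-submodule $K[M]$ is finitely generated as well; unwinding this gives that $M$ is a finitely generated semigroup. The essential mechanism is the sandwich $M_0\subseteq M\subseteq L$ with $L$ module-finite over $M_0$, after which Noetherianity does all the work---no decomposition of individual Betti tables is needed.
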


Note that the index of actual Betti tables among the integral points on a ray of the Boij-S\"oderberg cone  can be arbitrary large \cite{EES}. However, along the extremal rays, Eisenbud and Weyman conjecture that  any sufficiently large integral point is the Betti table of a module. 

To understand the monoid is substantially more difficult than understanding the cone. For example, unlike the Boij-S\"oderberg cone the monoid of Betti tables depends on the characteristic of the ground field \cite{Kunte}.
A case where we understand the monoid completely can be found in \cite{EES}.

We believe that the most important next step in trying to understand the monoid better, would be a proof of the Eisenbud-Buchsbaum-Horrocks rank conjecture \cite{BE2}. Daniel
Erman \cite{Erman2} uses  the Boij-S\"oderberg decomposition
to prove  the rank conjecture for $M=S/I$ a cyclic module, provided that  the minimal generators of the ideal $I$ are sufficiently large compared to the regularity.

Turning to coherent sheaves, it is no longer true that the cohomology table of any sheaf is a finite sum of tables of supernatural sheaves of various dimensions. What remains true is that a cohomology table of an arbitrary coherent sheaf on $\PP^m$ is an infinite (convergent) sum with non-negative coefficients of cohomology tables of supernatural sheaves, whose zero sequences form a chain, see \cite{ES2}. This result, however, does not characterize the Boij-S\"oderberg cone of coherent sheaves on $\PP^m$ but only its closure in
$\bigoplus_{i=0}^m \prod_{j\in\ZZ} \RR$.

If $X \subset \PP^m$ is a subvariety of dimension $d$  we can ask about the Boij-S\"oderberg cone of its coherent sheaves. Consider a linear Noether normalization $\pi: X \to \PP^d$. Then, for any sheaf $\mathcal F$ on $X$, the cohomology table of $\mathcal F$ and $\pi_*\sF$ coincide. Hence the Boij-S\"oderberg cone
of $(X,\sO_X(1))$ is a subcone of the one of $(\PP^d,\sO(1))$. If they coincide then there exists a sheaf $\sU$ on $X$, whose cohomology table coincides with that of $\sO_{\PP^d}$ up to a multiple. By Horrocks criterion \cite{Ho}, this implies $\pi_* \sU \cong \sO_{\PP^d}^r$. By the very definition \cite{ES-C}, this means that $\mathcal U$ is an Ulrich sheaf on $X$. 

Conversely, if an Ulrich sheaf exists then for a sheaf $\sG$ on $\PP^d$, the cohomology table of
$\sU \otimes \pi^* \sG$ and $\sG$ coincide up to the factor $r$. This proves

\begin{theorem}[\cite{ES-V}] The Boij-S\"oderberg cone of the coherent sheaves on a variety $X$ of dimension $d$
with respect to a very ample polarization $\sO_X(1)$ coincides with the Boij-S\"oderberg cone of $(\PP^d,\sO(1))$ if and only if $X$ carries an Ulrich sheaf.
\end{theorem}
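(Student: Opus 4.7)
The plan is to exploit the fact that a linear Noether normalization $\pi: X \to \PP^d$ is finite, so for every coherent sheaf $\sF$ on $X$ one has $R^i \pi_* \sF = 0$ for $i>0$ and hence $\gamma(\sF) = \gamma(\pi_*\sF)$. This realizes the Boij-S\"oderberg cone of $(X,\sO_X(1))$ as a subcone of the Boij-S\"oderberg cone of $(\PP^d,\sO(1))$, so the theorem reduces to the question of when this inclusion is an equality. The two directions then split cleanly.

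For the ``only if'' direction, I would assume the two cones coincide. Then the ray spanned by $\gamma(\sO_{\PP^d})$ must be realized by sheaves on $X$, so after clearing denominators and taking a direct sum I can produce a single coherent sheaf $\sU$ on $X$ with $\gamma(\sU) = r \cdot \gamma(\sO_{\PP^d})$ for some positive integer $r$. Pushing down, $\pi_*\sU$ has the same cohomology table as $\sO_{\PP^d}^r$; in particular all intermediate cohomology of every twist vanishes. Horrocks' splitting criterion \cite{Ho} then forces $\pi_*\sU \cong \sO_{\PP^d}^r$, which by the definition recalled in \cite{ES-C} says exactly that $\sU$ is an Ulrich sheaf on $X$.

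For the ``if'' direction, suppose an Ulrich sheaf $\sU$ on $X$ exists, with $\pi_*\sU \cong \sO_{\PP^d}^r$. Given any coherent sheaf $\sG$ on $\PP^d$, I would form the sheaf $\sU \otimes \pi^*\sG$ on $X$. By the projection formula and the finiteness of $\pi$, $\pi_*(\sU \otimes \pi^*\sG) \cong \pi_*\sU \otimes \sG \cong \sG^r$, and invariance of cohomology under finite pushforward yields $\gamma(\sU \otimes \pi^*\sG) = r \cdot \gamma(\sG)$. Thus every cohomology table of $\PP^d$ is realized, up to a positive rational factor, by a sheaf on $X$, so the cones must coincide.

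The step I expect to be delicate is the Horrocks application in the forward direction: one has to be careful that equality of cohomology tables forces enough structure on $\pi_*\sU$ to conclude the splitting $\pi_*\sU \cong \sO_{\PP^d}^r$ (rather than merely equality of numerical invariants). Everything else is bookkeeping with finite pushforward, the projection formula, and the definition of an Ulrich sheaf.
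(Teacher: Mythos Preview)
Your proposal is correct and follows the paper's argument essentially line for line: the paper also uses a linear Noether normalization $\pi:X\to\PP^d$ to embed the cone of $X$ into that of $\PP^d$, invokes Horrocks' criterion on $\pi_*\sU$ for the forward direction, and uses $\sU\otimes\pi^*\sG$ for the converse. Your write-up is in fact slightly more explicit (you spell out the projection formula and flag the Horrocks step), but there is no substantive difference in strategy.
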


Varieties that have an Ulrich sheaf include curves and hypersurfaces. They are closed under Segre products, Veronese re-embeddings and transversal intersections. In \cite{ES-C} we conjecture that every variety has an Ulrich sheaf.

Very little is known for the extension of this theory to the multi-graded setting. I believe that there will be beautiful results ahead in this direction.


\vbox{\noindent Author Addresses:\par
\smallskip
\noindent{David Eisenbud}\par
\noindent{Department of Mathematics, University of California, Berkeley,
Berkeley CA 94720}\par
\noindent{eisenbud@math.berkeley.edu}\par
\smallskip
\noindent{Frank-Olaf Schreyer}\par
\noindent{Mathematik und Informatik, Universit\"at des Saarlandes, Campus E2 4, 
D-66123 Saarbr\"ucken, Germany}\par
\noindent{schreyer@math.uni-sb.de}\par
}


\begin{thebibliography}{a}

\bibitem{BS1} M. Boij and J. S\"oderberg. 
{Graded Betti numbers of Cohen-Macaulay modules and the multiplicity conjecture.}
 J. Lond. Math. Soc. \textbf{78}  (2008) 85--106.

\bibitem{BS2} M. Boij and J. S\"oderberg.
{Betti numbers of graded modules and the Multiplicity Conjecture in the non-Cohen-Macaulay case. }
Preprint: arXiv:0803.1645.

\bibitem{BF} M. Boij and G. Fl\o ystad. {The cone of Betti diagrams of bigraded artinian modules of codimension two.} Preprint: arXiv:1001.3238.

\bibitem{BE1} D. Buchsbaum and D. Eisenbud. 
{Generic free resolutions and a family of generically perfect ideals.}  Advances in Math.  \textbf{18}  (1975)  245--301.

\bibitem{BE2} D. Buchsbaum and D. Eisenbud.
 { Algebra structures for finite free resolutions, and some 
structure theorems for ideals of codimension 3}. Amer. J. Math. \textbf{99} (1977), no. 3, 447Ð485.

\bibitem{E1} D. Eisenbud. \textit{Commutative algebra. With a view toward algebraic geometry.} Graduate Texts in Mathematics, 150. Springer-Verlag, New York, 1995. xvi+785 pp.

\bibitem{EES} D. Eisenbud, D. Erman and F.-O.Schreyer.
{Beyond Numerics: The Existence of Pure Filtrations.}
Preprint: arXiv:1001.0585 

\bibitem{EFS} D. Eisenbud, G.~Fl\o ystad and F.-O. Schreyer.  
{Sheaf cohomology and free resolutions over exterior algebras.}
 Trans. Amer. Math. Soc.   \textbf{355}  (2003) 4397--4426.

\bibitem{EFW} D. Eisenbud, G. Fl\o ystad and J. Weyman. 
{The existence of pure
free resolutions.} Annales de l'Inst. Fourier. To appear.
arXiv:0709.1529

\bibitem{ES-C} D. Eisenbud and F.-O. Schreyer with an Appendix by J. Weyman.
{Resultants and Chow forms via exterior syzygies.}
 J. Amer. Math. Soc.   \textbf{16}  (2003), 537--579.
 
 \bibitem{ES1} D. Eisenbud and F.-O. Schreyer.
{Betti Numbers  of Graded Modules and Cohomology of Vectos Bundles.} 
 J. Amer. Math. Soc. \textbf{22 } (2009), 859--888. 

 \bibitem{ES2} D. Eisenbud and F.-O. Schreyer.
{Cohomology of  Coherent Sheaves and Series of Supernatural Bundles.} 
To appear in Jour. Euro. Math. Soc. Preprint: arXiv:0902.1594

 \bibitem{ES-V} D. Eisenbud and F.-O. Schreyer.
{Boij-S\"oderberg theory}. 
To appear in: G. Fl\o ystad, T. Johnsen and  A.L. Knudson (editors). {Combinatorial aspects of commutative algebra and algebraic geometry}, Proceeding of the Abel Symposium, 2009.

\bibitem{Erman1} D. Erman. 
{The Semigroup of Betti Diagrams.}
Algebra and Number Theory 3 (2009) 341--365.

\bibitem{Erman2} D. Erman. 
{A special case of the Buchsbaum-Eisenbud-Horrocks rank conjecture.}
Preprint: arXiv:0902.0316 

 \bibitem{VB} R. Hartshorne and A. Hirschowitz.
{Cohomology of a general instanton bundle.}
Ann. Sci. de l'\'Ecole Normale Sup. (1982) 365--390.

 \bibitem{HK} J. Herzog and M. K\"uhl. 
 {On the Betti numbers of finite pure and linear
resolutions.}
Comm. in Alg.   \textbf{13} (1984) 1627--1646.

\bibitem{HS} J. Herzog and H. Srinivasan. 
{Bounds for Multiplicities.}
Trans. Am. Math. Soc. (1998) 2879--2902.

\bibitem{Ho} G. Horrocks.
{Vector bundles on the punctured spectrum of a local ring.}
Proc. London Math. Soc. (3) {\bf 14} (1964) 689--713. 

\bibitem{HM} C. Huneke and M. Miller. {A note on the multiplicity of Cohen-Macaulay algebras with pure resolutions.}  Canad. J. Math. 37 (1985), 1149--1162.

 \bibitem{Kunte} M.~Kunte.
{Gorenstein modules of finite length.}
 Thesis, Universit\"at des Saarlandes (2008).
Preprint: arXiv:0807.2956

\bibitem{Las} A. Lascoux. 
{Syzygies des vari\'et\'es d\'eterminantales.} 
Adv. in Math.  \textbf{30}  (1978)  202--237.

\bibitem{MS} E. Miller and D. Speyer.  
{A Kleiman-Bertini theorem for sheaf tensor products.}
J. Algebraic Geom.  \textbf{17} (2008), 335--340.

 \bibitem{ESW} S. Sam, and J. Weyman. 
 {Pieri resolutions for classical groups.}
Preprint: arXiv:0907.4505





\end{thebibliography}
\end{document}